\newtheorem{theorem}{Theorem}[section] 
\newtheorem{lemma}[theorem]{Lemma}
\newtheorem{corollary}[theorem]{Corollary}
\newtheorem*{theorem*}{Theorem}
\newtheorem*{lemma*}{Lemma}
\newtheorem*{corollary*}{Corollary}
\theoremstyle{definition}
\newtheorem{definition}[theorem]{Definition}
\newtheorem{remark}[theorem]{Remark}
\newtheorem*{remark*}{Remark}
\newtheorem*{definition*}{Definition}
\newtheorem*{remarks*}{Remarks}
\newtheorem*{addenda*}{Addenda}
\DeclareMathOperator{\fix}{Fix}
\newcommand{\cpone}{\mathbb{CP}^1}
\newcommand{\cptwobar}{\smash{\overline{\mathbb{CP}}{}^2}}
\title[Exotic definite four-manifolds]
{Exotic definite four-manifolds with\\ non-cyclic fundamental group}
\author[R. Harris]{Robert Harris}
\address{Department of Pure Mathematics, 
University of Waterloo, 
Waterloo, ON, N2L 3G1, Canada} 
\email{robert.harris@uwaterloo.ca}
\author[P. Naylor]{Patrick Naylor}
\address{Department of Mathematics and Statistics,
McMaster University, 
Hamilton, ON, L8S 4K1, Canada}
\email{patrick.naylor@mcmaster.ca}
\author[B. D. Park]{B. Doug Park}
\address{Department of Pure Mathematics, 
University of Waterloo, 
Waterloo, ON, N2L 3G1, Canada} 
\email{bdpark@uwaterloo.ca}
\date{May 15, 2024}
\subjclass[2020]{57R55, 57K41, 57M10, 14E20}
\keywords{Exotic smooth structure, Seiberg-Witten invariant, Enriques-Einstein-Hitchin manifold, free group action, double branched cover}
\begin{document}

\begin{abstract}
We construct infinitely many pairwise non-diffeomorphic smooth structures on a definite $4$-manifold with non-cyclic fundamental group $\mathbb{Z}/2\times \mathbb{Z}/2$.
\end{abstract}

\maketitle

\section{Introduction}
Throughout this paper, a $4$-manifold will mean a closed connected oriented smooth $4$-dimensional manifold.  
We say that a $4$-manifold $X$\/ has an \emph{exotic}\/ smooth structure if $X$\/ possesses more than one smooth structure, i.e., there exists a $4$-manifold $X'$\/ that is homeomorphic but not diffeomorphic to $X$. In this paper, $G$\/ will always denote the product group 
$\mathbb{Z}/2\times \mathbb{Z}/2$, the non-cyclic group of order four. 

The first example of an exotic smooth structure on a $4$-manifold with a definite intersection form was given by Levine, Lidman and Piccirillo in \cite{levlidpic23}.  
Their construction yielded 4-manifolds with fundamental group $\mathbb{Z}/2$. 
To distinguish the smooth structures, they used explicit handle structures to compute the Ozsv\'ath–Szab\'o closed 4-manifold invariant. 
Later, Stipsicz and Szab\'o constructed more definite examples with $\mathbb{Z}/2$ fundamental group in \cite{stisza23} and \cite{stisza23-2} as the quotient spaces of free $\mathbb{Z}/2$ actions on certain simply connected $4$-manifolds with exotic smooth structures.  

Indefinite examples with odd $b_2^+$ and various finite cyclic fundamental groups were constructed by Torres in \cite{tor11} and \cite{tor14}. More recently, examples with $\mathbb{Z}/2$ fundamental group and even $b_2^+$ were constructed by Beke, Koltai and Zampa in \cite{bekkolzam23}.  
In this paper, we will construct infinitely many exotic smooth structures on a definite $4$-manifold with fundamental group isomorphic to  $G=\mathbb{Z}/2\times\mathbb{Z}/2$.  
More precisely we will prove:  

\begin{theorem}\label{theorem: main}
There exists a\/ $4$-manifold $Q$\/ with the following properties.
\begin{itemize}
    \item[(i)] $\pi_1(Q)\cong \mathbb{Z}/2\times\mathbb{Z}/2$;
    \item[(ii)]  $b_2(Q)=4$ and the intersection form on $Q$\/ is negative definite;  
    \item[(iii)]  $Q$\/ possesses infinitely many pairwise non-diffeomorphic smooth structures.  
\end{itemize}
\end{theorem}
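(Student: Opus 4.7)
\smallskip

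\noindent\textbf{Proof proposal.} The plan is to produce $Q$ as a quotient of a simply connected $4$-manifold $X$ homeomorphic to a K3 surface by a free smooth action of $G=\mathbb{Z}/2\times\mathbb{Z}/2$, and then to vary $X$ through an infinite family of exotic K3's on which the same $G$-action can be arranged to persist. A dimension count confirms this is the right target: if $\pi_1(Q)=G$ and $b_2(Q)=4$ with negative definite form, then $\chi(Q)=6$ and $\sigma(Q)=-4$, so the universal cover would have $\chi=24$ and $\sigma=-16$, which are exactly the K3 invariants. The ``Enriques--Einstein--Hitchin'' terminology in the keywords suggests that the starting model $Q$ is a standard $4$-manifold of this type, namely a free $G$-quotient of K3 that factors through an Enriques surface.

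The first step is to fix a smooth model: identify a free action of $G$ on a K3 surface $X$ so that $X\to X/G=Q$ is a regular $4$-fold cover with $\pi_1(Q)\cong G$ and the induced intersection form on $Q$ is negative definite of rank $4$. This action should realize the factorization $X\to X/(\mathbb{Z}/2)\to Q$, where the intermediate quotient is an Enriques surface admitting a further free involution. Verification of freeness and the required invariants for $Q$ is a bookkeeping step.

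The second step is to produce an infinite family of exotic K3's $\{X_n\}$ that all carry a free $G$-action topologically conjugate to the one on $X$. The natural approach is to construct $X_n$ by a $G$-equivariant modification of $X$ --- for instance, a knot surgery, logarithmic transform, or fiber sum performed simultaneously on the four translates of a chosen embedded torus (or its $G$-orbit), so that the operation descends to a modification of $Q$ along a single embedded torus. Equivalently, one performs the surgery on $Q$ and takes $X_n$ to be the pullback to the cover. By choosing the surgery data (e.g.\ a sequence of knots with distinct Alexander polynomials) one ensures that the $X_n$ are all homeomorphic to K3, but have pairwise distinct Seiberg--Witten invariants.

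The third step is distinguishing the resulting $Q_n=X_n/G$. For the topological side, Hambleton--Kreck-style classification of closed $4$-manifolds with finite fundamental group (applied to $G$) shows that the $Q_n$ are all homeomorphic to a single $Q$, once one checks that the equivariant modification does not change the relevant topological invariants (intersection form over $\mathbb{Z}[G]$, $w_2$-type, Kirby--Siebenmann invariant). For the smooth side, if $Q_n\cong Q_m$ diffeomorphically, then lifting to universal covers would produce a diffeomorphism $X_n\cong X_m$, contradicting the distinctness of their Seiberg--Witten invariants. The main obstacle I expect is step two: arranging a torus (or a $G$-orbit of tori) inside the K3 model on which one can perform surgery equivariantly while preserving freeness of the action and still producing enough variation in the SW invariants to separate infinitely many members of the family.
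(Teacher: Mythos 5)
Your overall strategy is the same as the paper's: equivariant Fintushel--Stern knot surgery on the four tori in a $G$-orbit of a generic elliptic fiber of a K3 surface carrying a free fibration-preserving $G$-action, Seiberg--Witten invariants of the covers to distinguish smooth structures, and a lift-to-the-universal-cover argument to push non-diffeomorphism down to the quotients. Two points, however, deserve attention.

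First, your third step overclaims. You assert that a Hambleton--Kreck-style classification shows the $Q_n$ are \emph{all} homeomorphic to a single $Q$ after checking the usual invariants. No such homeomorphism classification is currently available for fundamental group $\mathbb{Z}/2\times\mathbb{Z}/2$ (the Hambleton--Kreck classification results cover finite cyclic groups, and the paper explicitly remarks that even the homeomorphism type of the $Q_m$ relative to an Enriques--Einstein--Hitchin manifold cannot be pinned down at present). As written, this step of your argument fails. The fix is to use the weaker Hambleton--Kreck \emph{finiteness} theorem: among closed $4$-manifolds with fixed Euler characteristic and fixed finite fundamental group there are only finitely many homeomorphism types, so by the pigeonhole principle infinitely many of the pairwise non-diffeomorphic $Q_m$ must fall into a single homeomorphism class. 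That is exactly enough to prove item (iii), and it is what the paper does.

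Second, you correctly identify step two --- exhibiting a free $G$-action on a K3 surface that preserves an elliptic fibration and has a generic fiber orbit consisting of four disjoint tori --- as the main obstacle, but you do not resolve it, and it is where most of the paper's work lies. The paper gives two explicit constructions: a complete intersection of three quadrics in $\mathbb{CP}^5$ (Hitchin's example), on which the pencil of pairs of linear equations cutting out genus-one curves furnishes the invariant elliptic fibration, and a double branched cover of $\cpone\times\cpone$ over a $(4,4)$-curve, where one must extend the involutions over the blow-ups and lift them to the cover (checking the lifts have order $2$ rather than $4$, which uses Hitchin's result that K3 admits no free $\mathbb{Z}/4$-action). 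One also needs care with the gluing maps to make the four knot surgeries genuinely $G$-equivariant, so that the free action extends over the surgered manifold; the paper specifies compatible framings on the four boundary $3$-tori obtained by transporting a single framing by the group elements. Without these ingredients your outline is a correct plan rather than a proof.
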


Our exotic smooth structures will be obtained as free quotients of 
homotopy K3 surfaces, i.e., $4$-manifolds that are homeomorphic but non-diffeomorphic to the complex K3 surface. 
A 4-manifold that is obtained as a free $\mathbb{Z}/2\times \mathbb{Z}/2$ quotient of the K3 surface is called an \emph{Enriques-Einstein-Hitchin}\/ manifold or an \emph{EEH}\/ manifold for short. 
In this paper, we will work with two alternate descriptions of an EEH manifold: as the quotient of a complete intersection in $\mathbb{CP}^5$, and as a quotient of a double branched cover of $\cpone\times \cpone$. 
We will use these descriptions to construct \emph{generalized}\/ 
Enriques-Einstein-Hitchin manifolds, which are free $\mathbb{Z}/2\times \mathbb{Z}/2$ quotients of homotopy K3 surfaces.  

EEH manifolds have been studied by both algebraic and differential geometers for some time, and a comprehensive reference book on the subject is \cite{degitekha00}.  
By a theorem of Hitchin in \cite{hit74}, if $X$\/ is an Einstein $4$-manifold, then its signature $\sigma(X)$ and its Euler characteristic $\chi(X)$ satisfy $|\sigma(X)|\leq \frac{2}{3}\chi(X)$. 
Moreover, the equality occurs exactly for $4$-manifolds which are either flat or one of three (trivial, $\mathbb{Z}/2$, or $\mathbb{Z}/2\times \mathbb{Z}/2$) quotients of the K3 surface. 

Our basic strategy for constructing exotica is as follows.  
Let $X$\/ be a complex K3 surface with a holomorphic elliptic fibration $f:X\to \mathbb{CP}^1$. 
Suppose that there is a free smooth action of $G$\/ on $X$\/ that preserves the elliptic fibration (i.e., each element of $G$\/ maps a fiber of $f$\/ to a fiber of $f$). 
Starting with a smooth torus fiber $T$\/ whose orbit under the $G$\/ action consists of four disjoint torus fibers, we perform the same Fintushel-Stern knot surgery (cf.~\cite{finste98}) four times along each of these four fibers using the same knot in a certain family of knots $\{K_m \mid m\in \mathbb{Z}_+ \}$ that have distinct Alexander polynomials.  
If $X_{K_m}$ denotes the resulting $4$-manifold, then $X_{K_{m_1}}$ and $X_{K_{m_2}}$ will be pairwise homeomorphic but pairwise non-diffeomorphic when $m_1\neq m_2$.  
By the work of Hambleton and Kreck in \cite{hamkre88}, the corresponding quotient spaces $\{X_{K_m}/G \mid m\in\mathbb{Z}_+ \}$\/ will then necessarily contain an infinite family of $4$-manifolds that are homeomorphic but pairwise non-diffeomorphic.

\subsection*{Organization}

In \S\ref{sec: smooth structures}, we review the basic material from \cite{finste98} and \cite{finste09} on the Fintushel-Stern knot surgeries and their Seiberg-Witten invariants, and show how to smoothly distinguish the generalized Enriques-Einstein-Hitchin manifolds.  
In \S\ref{sec: hitchin example}, we review an algebro-geometric description of a particular EEH $4$-manifold, and in \S\ref{sec: branched cover}, we give another description using a standard construction of the K3 surface as a double branched cover.

\subsection*{Acknowledgements} 
The second and third authors were supported by NSERC Discovery Grants. The second author was also supported by a grant from McMaster University.  We thank Ian Hambleton for many helpful discussions, and in particular, telling us about Hitchin's construction in \cite{hit74}. We would also like to thank Tyrone Ghaswala for enlightening discussions about branched covers.

\section{Distinguishing smooth structures}\label{sec: smooth structures}

In this section, we briefly review the knot surgery operation due to Fintushel and Stern, and how it changes the Seiberg-Witten invariants of a 4-manifold.  
Supposing that we can find a free $\mathbb{Z}/2\times\mathbb{Z}/2$ action on the K3 surface that preserves the elliptic fibration, we will show that this leads to infinitely many exotic smooth structures on generalized Einstein-Enriques-Hitchin $4$-manifolds. 

\begin{definition}
    Let $X$ be a $4$-manifold containing an embedded torus $T$\/ with trivial normal bundle, and suppose that $K$\/ is a knot in $S^3$. 
    The result of \emph{knot surgery}\/ along $T$\/ is a $4$-manifold of the form 
    \[X_K := (X- \nu(T)) \cup_{\varphi} (S^1\times (S^3- \nu(K)), \]
    where $\nu(T)\cong T\times D^2$ is a tubular neighborhood of $T$, 
    and the gluing map $\varphi$\/ between the boundary $3$-tori sends the homology class of the meridian $\mu(T) = \{\textnormal{point}\}\times \partial D^2\subset \partial(\nu(T))$ of $T$\/ to that of the longitude of $K$. 
\end{definition}

Note that the above definition does not completely determine the isotopy class of the gluing map $\varphi$, but this is not always necessary. 
If $X$ is simply connected and $\pi_1(X\setminus T)=1$, then $X_K$ is also simply connected and has the same intersection form as $X$.  
By Freedman's Theorem in \cite{fre82}, $X$ and $X_K$ are homeomorphic. For further details regarding this construction, the reader is referred to \cite{finste98}.

Now suppose that $X$\/ is a $4$-manifold with $b_2^+(X)>1$.  
Recall that the Seiberg-Witten invariant of $X$\/ can be expressed as an integer-valued function
\[
SW_X: H^2(X;\mathbb{Z})\longrightarrow \mathbb{Z}
\]
that has finite support.  
If $h:X_1\to X_2$ is a diffeomorphism, then 
\[
SW_{X_1}(h^{\ast}(L))=\pm SW_{X_2}(L)
\]
for all $L\in H^2(X_2;\mathbb{Z})$.  
If we write $H=H^2(X;\mathbb{Z})$, then 
the Seiberg-Witten invariant of $X$\/ can also be expressed as an element 
\[
\overline{SW}_X=\sum_{L\in H} SW_X(L)\, L 
\in \mathbb{Z}[H], 
\]
in the integer group ring of $H$.  
For each positive integer $m>0$, we let $K_m$ be a knot with symmetrized Alexander polynomial equal to
\[
\Delta_{K_m}(t)=mt-(2m-1)+mt^{-1}.
\]
For example, we could take $K_m$ to be the twist knot with $2m+1$ half twists. The following lemma can be easily derived from the works of Fintushel and Stern in \cite{finste98} and \cite{finste09}.

\begin{lemma}\label{lemma: SW-invariant}
{\rm (\S1 of Lecture~3 in \cite{finste09})}
Let $X$ be a\/ $4$-manifold with $b_2^+(X)>1$ and a nontrivial Seiberg-Witten invariant $SW_X\not\equiv 0$.  
Let $T_i$ $(i=1,\dots,r)$ be disjoint smoothly embedded tori in $X$\/ that lie in the same non-torsion homology class $[T_i]=[T]\in H_2(X;\mathbb{Z})$ with square $[T]^2=0$.  
Also assume that\/ $\pi_1(X)=1$ and $\pi_1(X-T_i)=1$ for all\/ $i=1,\dots,r$.  
Let $X_m$\/ denote the result of performing a knot surgery along each $T_i$ all using the same knot $K_m$.  
Then we have 
\[
\overline{SW}_{X_m} = \overline{SW}_X \cdot (\Delta_{K_m}(PD(2[T])))^r,
\]
where $PD:H_2(X;\mathbb{Z})\to H^2(X;\mathbb{Z})$ denotes the Poincar\'{e} duality homomorphism and the product on the right-hand side is the product inside the group ring $\mathbb{Z}(H^2(X_m;\mathbb{Z}))$.  
\end{lemma}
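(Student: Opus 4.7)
The strategy is to induct on $r$, applying the single-surgery Fintushel–Stern product formula at each step. Write $X^{(0)} = X$ and let $X^{(k)}$ denote the manifold obtained by performing knot surgery along $T_1, \ldots, T_k$ with the knot $K_m$, so that $X^{(r)} = X_m$. The base case $r = 1$ is the standard product formula
\[
\overline{SW}_{X^{(1)}} = \overline{SW}_X \cdot \Delta_{K_m}(PD(2[T])),
\]
proved by Fintushel and Stern in \cite{finste98}, and valid under the stated hypotheses that $X$ is simply connected with $b_2^+(X) > 1$ and $T = T_1$ is a \emph{c-embedded} torus: smoothly embedded with trivial normal bundle, simply connected complement, and non-torsion square-zero homology class.

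For the inductive step, I would verify that $(X^{(k-1)}, T_k)$ satisfies the same hypotheses as $(X, T_1)$ so that the base formula applies again. The intersection form is preserved by knot surgery, so $b_2^+(X^{(k-1)}) > 1$; simple connectivity of $X^{(k-1)}$ follows from a van Kampen argument using that the meridian of $K_m$ is null-homotopic inside $S^1 \times (S^3 \setminus \nu(K_m))$ together with $\pi_1(X \setminus T_k) = 1$. Since the tori $T_i$ are pairwise disjoint, all earlier surgeries are supported away from $T_k$, so $X^{(k-1)} \setminus T_k$ is obtained from $X \setminus T_k$ by performing the same $k-1$ surgeries; a parallel van Kampen argument (using $\pi_1(X \setminus (T_i \cup T_k)) = 1$, which follows from the simple connectedness of each $T_j$-complement) then gives $\pi_1(X^{(k-1)} \setminus T_k) = 1$. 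The class $[T_k]$ is unchanged by surgeries in its complement and remains non-torsion with self-intersection zero, and under the natural Mayer–Vietoris identification of cohomology, the Poincar\'e dual $PD(2[T_k])$ in $H^2(X^{(k-1)}; \mathbb{Z})$ corresponds to $PD(2[T])$. Applying the base formula thus yields
\[
\overline{SW}_{X^{(k)}} = \overline{SW}_{X^{(k-1)}} \cdot \Delta_{K_m}(PD(2[T])),
\]
and iterating $r$ times gives the claimed identity.

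The main obstacle is the bookkeeping in the inductive step: verifying that after each intermediate surgery the remaining torus complements $X^{(k-1)} \setminus T_j$ (for $j > k-1$) remain simply connected, so that the single-surgery formula can be reapplied. The hypotheses of the lemma have been arranged precisely so that these van Kampen arguments go through at every stage. Once this is in place, the identification of the Poincar\'e duals across the iterated cohomology groups $H^2(X^{(k)}; \mathbb{Z})$ is routine, since each knot surgery is locally supported and replaces $T_k \times D^2$ by $S^1 \times (S^3 \setminus \nu(K_m))$ in a way that naturally matches $PD(2[T_k])$ with $PD(2[T])$ in the ambient integral group ring.
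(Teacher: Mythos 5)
The paper does not actually prove this lemma; it is quoted directly from \S1 of Lecture~3 of \cite{finste09}, so your induction on $r$ is being measured against the Fintushel--Stern argument rather than against anything written in the text. The overall plan --- iterate the single-torus product formula, checking at each stage that its hypotheses are reproduced --- is the right one. However, there is a genuine gap in your inductive step. You assert that $\pi_1(X\setminus(T_i\cup T_k))=1$ ``follows from the simple connectedness of each $T_j$-complement.'' It does not: $\pi_1(X\setminus T_k)=1$ says the meridian of $T_k$ bounds a disk in $X\setminus T_k$, but that disk may pass through $T_i$ (in the paper's application the natural such null-homotopy comes from a punctured sphere section of the elliptic fibration, which meets \emph{every} fiber), so the meridian of $T_k$ need not die in $X\setminus(T_i\cup T_k)$, and the complement of the union can have nontrivial fundamental group. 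Consequently your verification that $\pi_1(X^{(k-1)}\setminus T_k)=1$ does not go through as written, and the hypotheses of your base case are not known to hold for $(X^{(k-1)},T_k)$ once $k\ge 2$.

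The fix is to notice that the $\pi_1$ hypotheses are not what drives the Seiberg--Witten computation. The version of the knot surgery formula in \cite{finste09} that one should induct with is the one obtained from the Morgan--Mrowka--Szab\'o/Meng--Taubes gluing formula along the boundary $3$-torus: it requires only that $T$ be a smoothly embedded torus representing a non-torsion class of square zero in a manifold with $b_2^+>1$, with no condition on $\pi_1(X\setminus T)$. (The simple connectivity assumptions in the lemma serve to pin down the homeomorphism type of $X_m$, not to make the product formula hold.) With that statement as the base case, the inductive step only needs $[T_k]$ to remain non-torsion of square zero in $X^{(k-1)}$ and the identification of $PD(2[T])$ under the Mayer--Vietoris isomorphism, both of which you handle correctly. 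Alternatively, one can keep your $\pi_1$-based route, but then one must add (and, in the application, verify) the stronger hypothesis that the complement of the \emph{union} of the tori is simply connected, or that each $T_i$ is c-embedded as in \cite{finste98}.
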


In particular, let $X$\/ be an elliptic K3 surface.  
Note that ${SW}_X(0)=1$ and $SW_X(L)=0$ for all $L\neq 0\in H^2(X;\mathbb{Z})$.  
If $T$\/ denotes a smooth torus fiber of an elliptic fibration  $f:X\to\mathbb{CP}^1$, then 
$\pi_1(X-T)=1$ since $\pi_1(X)=1$ and there is a sphere section of $f$\/ that will bound any meridian circle of $T$.   
Lemma~\ref{lemma: SW-invariant} then implies that 
\[
\overline{SW}_{X_m} = \big(\Delta_{K_m}(PD(2[T]))\big)^r
= \big(mPD(2[T])-(2m-1)[0]+m(-PD(2[T]))\big)^r,  
\]
where $[0]\in H^2(X;\mathbb{Z})$ denotes the trivial class
and the exponent means that we take the $r$-fold product in the group ring.  
By comparing the coefficients of $\overline{SW}_{X_m}$, we immediately see that $X_m$'s consist of pairwise non-diffeomorphic $4$-manifolds.  

Now assume that there exists a free orientation-preserving action of $$G=\mathbb{Z}/2\times\mathbb{Z}/2=\langle\sigma,\tau\mid \sigma^2=\tau^2=1,\;\sigma\tau=\tau\sigma\rangle$$ on our elliptic K3 surface $X$.  
Assume furthermore that our $G$\/ action preserves the elliptic fibration, i.e., each fiber is mapped into a fiber.  
Let $T_1$\/ be a generic torus fiber of $X$\/ such that its $G$\/ orbit 
consists of four disjoint smooth tori:  
$T_1$, $T_2=\sigma(T_1)$, $T_3=\tau(T_1)$ and $T_4=(\tau\circ\sigma)(T_1)$.  
Choose tubular neighborhoods $\nu(T_i)$ of $T_i$ in $X$\/ ($i=1,\ldots,4$) that are also disjoint. 
We need to perform a Fintushel-Stern knot surgery on each of $T_1,\ldots, T_4$ using the same knot $K_m$ equivariantly so that our free $G$\/ action on $X-\sqcup_{i=1}^4\nu(T_i)$ extends to the resulting homotopy K3 surface $X_m$.  

Fix the positive integer $m$.  
Let $E=S^1 \times (S^3-\nu(K_m))$, where $S^3-\nu(K_m)$ denotes the complement of the tubular neighborhood $\nu(K_m)$ of the knot $K_m$ in $S^3$.  
We take four copies of $E$, denoted by $E_1,\ldots, E_4$.  
To obtain $X_m$, we glue $E_1,\ldots, E_4$ to the complement $X-\sqcup_{i=1}^4\nu(T_i)$: 
\[
X_m = \left(X-\sqcup_{i=1}^4 \nu(T_i)\right) \cup_{\varphi_i} \left(\sqcup_{i=1}^4 E_i\right), 
\]
where $\varphi_i: \partial(E_i) \to \partial(\nu(T_i))$ are the gluing diffeomorphisms on the boundary $3$-tori.  

To specify $\varphi_i$, we first choose a framing of the boundary component $\partial(\nu(T_1))$, i.e., a diffeomorphism 
$$j_1: T^3=S^1_\alpha\times S^1_\beta \times S^1_\mu \longrightarrow \partial(\nu(T_1)),$$ 
such that each torus $(S^1_\alpha\times S^1_\beta)\times \ast\,$ is mapped to a fiber $T_1^{\parallel}$ that is parallel to $T_1$, and 
each third-factor circle $(\ast\times\ast)\times S^1_\mu$ is mapped to a meridian of $T_1$.  
Since $\sigma$, $\tau$ and $\tau\circ\sigma$\/ all preserve the torus fibers, the compositions $\sigma\circ j_1$, $\tau\circ j_1$ and $(\tau\circ\sigma)\circ j_1$\/ are framings of $\partial(\nu(T_2))$, 
$\partial(\nu(T_3))$ and $\partial(\nu(T_4))$, respectively.  

Next we choose the framing on the boundary $\partial E$, i.e., a diffeomorphism  
$$j:T^3=S^1_\alpha\times S^1_\beta \times S^1_\mu \longrightarrow \partial E$$ 
such that each first-factor circle $S^1_\alpha \times(\ast\times\ast)$ is mapped to a circle $S^1\times \{\mathrm{point}\}$, where the point lies on the boundary $\partial (S^3-\nu(K_m))$, 
each second-factor circle $\ast \times S^1_\beta\times\ast$ is mapped to a meridian $\mu(K)$, and 
each third-factor circle $(\ast\times\ast) \times S^1_\mu$ is mapped to a longitudinal knot $\lambda(K)$.  
We use the same framing $j$\/ for each of the four boundary components $\partial E_1,\ldots, \partial E_4$.  

Now we are ready to specify the gluing diffeomorphisms $\varphi_i: \partial(E_i) \to \partial(\nu(T_i))$.  
We will choose $\varphi_1=j_1\circ j^{-1}$, $\varphi_2=(\sigma\circ j_1)\circ j^{-1}$, $\varphi_3=(\tau\circ j_1)\circ j^{-1}$, and $\varphi_4=(\tau\circ\sigma\circ j_1)\circ j^{-1}$.
We can check immediately that every $\varphi_i$ maps each torus $S^1\times\mu(K)$ to a parallel fiber $T_i^{\parallel}$ and maps each longitudinal knot $\lambda(K)$ to a meridian of $T_i$.  
Thus every $\varphi_i$ defines a Fintushel-Stern knot surgery.  
We also immediately see that the free $G$\/ action on the complement $X-\sqcup_{i=1}^4 \nu(T_i)$ extends to a free $G$\/ action on $X_m$ by extending by appropriate identity maps.  
The action of $\sigma$\/ can be extended by the identity maps $E_1\to E_2$, $E_2\to E_1$, $E_3\to E_4$, and  $E_4\to E_3$.   
The action of $\tau$\/ can be extended by the identity maps $E_1\to E_3$, $E_3\to E_1$, $E_2\to E_4$, and  $E_4\to E_2$.   
The action of $\tau\circ\sigma$\/ can be extended by the identity maps $E_1\to E_4$, $E_4\to E_1$, $E_2\to E_3$, and  $E_3\to E_2$.   

It follows that $G$\/ acts freely on each homotopy K3 surface $X_m$.  
Since $X_m$ is simply connected, the corresponding quotient space $Q_m=X_m/G$\/ has fundamental group that is isomorphic to $G=\mathbb{Z}/2\times\mathbb{Z}/2$.
Recall that the Euler characteristic and the signature of $X$ (and hence those of $X_m$) are $24$ and $-16$, respectively.  
The Euler characteristic of $Q_m$ is then equal to the quotient $24/4=6$.  
Since $b_1(Q_m)=0$, we must have $b_2(Q_m)=4$.  
By Hirzebruch's signature theorem (see Theorem 8.2.2 on p.~86 of \cite{hir66}), the signature is multiplicative over unbranched covers.  
Thus the signature of the quotient space $Q_m$ is equal to $-16/4=-4=-b_2(Q_m)$.  
It follows that $Q_m$ has a negative definite intersection form.
By a generalization of Donaldson's diagonalization theorem in \cite{don83} to a non-simply connected setting (e.g. Theorem 2.4.18 in \cite{nic00}), the intersection form of $Q_m$ is given by $\oplus^{4}\langle -1 \rangle$.

To show that there are exotic smooth structures, we need to recall the following theorem due to Hambleton and Kreck.  

\begin{theorem}\label{theorem: hambleton-kreck}
{\rm (Corollary to (1.1) on p.~87 of \cite{hamkre88})} 
Let $n\in\mathbb{Z}$ and let $G$\/ be a finite group.
Then there are only finitely many homeomorphism types among all $4$-manifolds whose Euler characteristic is equal to $n$ and whose fundamental group is isomorphic to $G$.  
\end{theorem}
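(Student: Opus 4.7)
The plan is to reduce the problem to showing that each invariant in a topological classification of 4-manifolds takes only finitely many values when $\pi_1$ and the Euler characteristic are fixed, and then to invoke the classification of closed topological 4-manifolds with finite fundamental group.

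First, I would extract the topological invariants forced by the hypotheses. Since $H_1(X;\mathbb{Z})\cong G^{\mathrm{ab}}$ is finite, we have $b_1(X)=0$ and hence $b_2(X)=n-2$. The signature is constrained by $|\sigma(X)|\leq b_2(X)$ and so varies in a finite set of integers. Passing to the universal cover $\widetilde{X}$, which is simply connected with $\chi(\widetilde{X})=|G|\,n$, the second homotopy group $\pi_2(X)\cong H_2(\widetilde{X};\mathbb{Z})$ is a torsion-free $\mathbb{Z}[G]$-lattice of determined $\mathbb{Z}$-rank $|G|\,n-2$. By the Jordan--Zassenhaus theorem for orders in semisimple $\mathbb{Q}$-algebras, there are only finitely many isomorphism classes of $\mathbb{Z}[G]$-lattices of any given rank, so $\pi_2(X)$ varies in a finite set as a $\mathbb{Z}[G]$-module.

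Next, I would bound the remaining invariants that assemble into the homeomorphism type. The equivariant intersection form on $\pi_2(X)$ is a unimodular Hermitian form over $\mathbb{Z}[G]$ with respect to the involution $g\mapsto g^{-1}$; the number of isomorphism classes on any fixed $\mathbb{Z}[G]$-lattice is finite, by the finiteness of the genus for arithmetic groups (Borel--Harish-Chandra applied to the group scheme of isometries). The first $k$-invariant of the Postnikov 2-type lies in $H^3(G;\pi_2(X))$, which is a finite group because $G$ is finite and $\pi_2(X)$ is finitely generated. The $w_2$-type and the Kirby--Siebenmann invariant each take finitely many values a priori.

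Finally, I would appeal to the classification of closed topological 4-manifolds with finite fundamental group, which determines such a manifold up to homeomorphism from the quadratic 2-type, consisting of $\pi_1$, $\pi_2$, the $k$-invariant and the equivariant intersection form, together with the $w_2$-type and the Kirby--Siebenmann invariant. This classification rests on Freedman--Quinn topological surgery, which applies because every finite group is good in the sense of Freedman. Since each of the constituent invariants takes only finitely many values under the hypotheses, so does the homeomorphism type. The main obstacle is precisely this classification step: proving it requires a careful analysis of the topological surgery exact sequence for 4-manifolds with finite fundamental group and a computation of the relevant $L$-groups $L_5^h(\mathbb{Z}[G])$ acting on the structure set, which is where most of the technical work of Hambleton and Kreck resides. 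By comparison, the finiteness of each constituent invariant, most notably of $\pi_2(X)$ as a $\mathbb{Z}[G]$-module via Jordan--Zassenhaus, is comparatively formal.
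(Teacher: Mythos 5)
The paper does not prove this statement at all: it is imported verbatim as the Corollary to Theorem~(1.1) of Hambleton--Kreck, so there is no internal proof to compare against. Your outline is, in substance, a faithful reconstruction of how that corollary is actually deduced in the cited source: fix $\chi$ and $\pi_1$, bound $b_2$ and $\sigma$, apply Jordan--Zassenhaus to pin down $\pi_2$ as a $\mathbb{Z}[G]$-lattice up to finitely many choices, use finiteness of the genus for the equivariant intersection form, observe that $H^3(G;\pi_2)$, the $w_2$-type and the Kirby--Siebenmann invariant are finite, and then invoke the classification-up-to-finite-ambiguity by the quadratic $2$-type, which is where the surgery-theoretic content lives. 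One small overstatement: the quadratic $2$-type together with the $w_2$-type and KS invariant does not determine the homeomorphism type exactly in Hambleton--Kreck's Theorem~(1.1); it determines it only up to finitely many possibilities (the residual ambiguity being controlled by the action of $L_5^h(\mathbb{Z}[G])$ on the structure set). Since ``finitely many possibilities'' is all the corollary needs, this does not affect your argument, but you should phrase the final appeal accordingly.
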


\begin{corollary}
The collection $\{Q_m\mid m\in\mathbb{Z}_+\}$ contains infinitely many homeomorphic $4$-manifolds that are pairwise non-diffeomorphic.  
\end{corollary}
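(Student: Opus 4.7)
The plan is to combine two ingredients: the Hambleton--Kreck finiteness theorem, which forces infinitely many of the $Q_m$ into a common homeomorphism class by pigeonhole, and the elementary fact that a diffeomorphism between two of the $Q_m$'s lifts to a diffeomorphism between their universal covers $X_m$, contradicting the Seiberg--Witten computation established above.

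For the first step, I note that every $Q_m$ satisfies $\pi_1(Q_m)\cong G$ and $\chi(Q_m)=6$, as computed in the paragraph following the construction of the free $G$-action on $X_m$. Applying Theorem \ref{theorem: hambleton-kreck} with $n=6$ and fundamental group $G$, the collection $\{Q_m\mid m\in\mathbb{Z}_+\}$ represents only finitely many homeomorphism types. Pigeonhole then supplies an infinite subset $M\subseteq\mathbb{Z}_+$ such that the manifolds $\{Q_m\mid m\in M\}$ are pairwise homeomorphic.

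For the second step, I use that $p_m:X_m\to Q_m$ is the universal cover, since $X_m$ is simply connected and $G$ acts freely. If there were a diffeomorphism $f:Q_m\to Q_{m'}$ for some distinct $m,m'\in M$, the standard lifting criterion would produce a lift $\tilde f:X_m\to X_{m'}$, which is automatically a diffeomorphism (smooth because the covering maps $p_m$ and $p_{m'}$ are local diffeomorphisms, and with a smooth inverse obtained by lifting $f^{-1}$). This contradicts the fact, already derived from Lemma \ref{lemma: SW-invariant} by comparing coefficients in the group ring, that the $X_m$ have pairwise distinct Seiberg--Witten invariants and are therefore pairwise non-diffeomorphic.

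The only nontrivial step is the diffeomorphism lift in the second paragraph, and even that is routine covering-space theory in the presence of a free action by a finite group; there is no substantive obstacle beyond what has already been set up in the section.
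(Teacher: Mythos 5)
Your proposal is correct and follows the paper's proof essentially verbatim: the same two ingredients (lifting a putative diffeomorphism to the universal covers to contradict the Seiberg--Witten computation, plus Theorem~\ref{theorem: hambleton-kreck} and pigeonhole for the homeomorphisms), merely presented in the opposite order and with slightly more detail on the covering-space step.
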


\begin{proof}
If there was a diffeomorphism $h: Q_{m_1}\to Q_{m_2}$, then we could lift $h$\/ to a diffeomorphism $\widetilde{h}: X_{m_1}\to X_{m_2}$ between the universal covers, which is a contradiction.  
Hence $\{Q_m\mid m\in\mathbb{Z}_+\}$ consists of pairwise non-diffeomorphic $4$-manifolds.  
But by Theorem~\ref{theorem: hambleton-kreck} and the pigeonhole principle, infinitely many of the $Q_m$'s must be homeomorphic. 
\end{proof}

\begin{remark}
As far as the authors know, there is no homeomorphism classification of the $4$-manifolds whose fundamental group is $\mathbb{Z}/2\times\mathbb{Z}/2$ at this time. 
However, by work of Kasprowski, Powell and Ruppik in \cite{kaspowrup20}, the homotopy type (but not the homeomorphism type) of such a 4-manifold is determined by its quadratic 2-type.
Consequently, we cannot conclude that each $Q_m$ is homeomorphic to some Enriques-Einstein-Hitchin manifold. In a sequel paper, we hope to determine the homomorphism type of $Q_m$. 

\end{remark}

\section{First example}\label{sec: hitchin example}

In this section, we will present a concrete example of a K3 surface with a free $\mathbb{Z}/2\times \mathbb{Z}/2$ action.  
Our example was first studied in detail by Hitchin in \cite{hit74} (p.~440).  
Let $A=[A_{ij}]$ and $B=[B_{ij}]$ be real $3\times 3$ matrices, and 
let $x=(x_1,x_2,x_3)$, $y=(y_1,y_2,y_3)\in\mathbb{C}^3$.  
If $A$\/ and $B$\/ are invertible, then the three homogeneous quadratic equations
\begin{align}
A_{11}x_1^2+A_{12}x_2^2+A_{13}x_3^2
+B_{11}y_1^2+B_{12}y_2^2+B_{13}y_3^2&=0, \nonumber \\
A_{21}x_1^2+A_{22}x_2^2+A_{23}x_3^2
+B_{21}y_1^2+B_{22}y_2^2+B_{23}y_3^2&=0, 
\label{eq: complete intersection}\\
A_{31}x_1^2+A_{32}x_2^2+A_{33}x_3^2
+B_{31}y_1^2+B_{32}y_2^2+B_{33}y_3^2&=0 \nonumber
\end{align}
define a complete intersection variety $X$\/ in $\mathbb{CP}^5$.  

It can be shown that $X$\/ is a K3 surface (cf.~Exercise~1.3.13(e) on p.~24 of \cite{gomsti99}).  
It was also observed in \cite{hit74} that if 
\begin{equation}\label{eq: conditions for free action}
A_{1,j}>0, \ B_{1,j}>0, \ A_{2,j}>0, \textrm{\ and\ } -B_{2,j}>0
\end{equation}
for all $j=1,2,3$, then 
\begin{equation*}
\sigma(x,y)=(\overline{x},\overline{y}), \quad
\tau(x,y)=(x, -y) 
\end{equation*}
define commuting involutions that generate a free $\mathbb{Z}/2\times \mathbb{Z}/2$ action on $X$. 
On the other hand, an elliptic fibration structure on $X$\/ was not described in \cite{hit74}.  
We will now define an elliptic fibration on $X$\/ that is preserved by this $\mathbb{Z}/2\times \mathbb{Z}/2$ action.  

We start by observing that the sign conditions in (\ref{eq: conditions for free action}) do not involve the third rows of $A$\/ and $B$.  
Let us choose 
\begin{equation}\label{eq: choice of third row}
A_{31}=B_{32}=1, \ A_{32}=B_{31}=-1, \textrm{\ and\ } A_{33}=B_{33}=0. 
\end{equation} 
There are many such matrices $A$\/ and $B$\/ that satisfy both (\ref{eq: conditions for free action}) and (\ref{eq: choice of third row}).  
For example, we could choose 
\begin{equation*}
A= \left[\begin{array}{rrr}
1&2&1\\
1&1&1\\
1&-1&\hspace{5pt} 0
\end{array}\right], \ 
B= \left[\begin{array}{rrr}
1&2&1\\
-1&-1&-1\\
-1&1&0
\end{array}\right],
\end{equation*}
which satisfy $\det(A)=\det(B)=1$.  
Note that the third equation in (\ref{eq: complete intersection}) now becomes 
\begin{equation}\label{eq: choice of third equation}
x_1^2-x_2^2-y_1^2+y_2^2=(x_1+y_1)(x_1-y_1) - (x_2+y_2)(x_2-y_2) =0.   
\end{equation}

Next we consider the complete intersection variety $E_{(\lambda:\mu)}$\/ in $\mathbb{CP}^5$ given by the four equations:  
\begin{equation}\label{eq: torus equations}
\begin{array}{rl}
A_{11}x_1^2+A_{12}x_2^2+A_{13}x_3^2
+B_{11}y_1^2+B_{12}y_2^2+B_{13}y_3^2&=0, \\[3pt]
A_{21}x_1^2+A_{22}x_2^2+A_{23}x_3^2
+B_{21}y_1^2+B_{22}y_2^2+B_{23}y_3^2&=0, \\[3pt]
\lambda(x_1+y_1)-\mu(x_2-y_2)&=0, \\[3pt]
\mu(x_1-y_1)-\lambda(x_2+y_2)&=0,
\end{array}
\end{equation}
where the first two equations in (\ref{eq: torus equations}) are exactly the same as in (\ref{eq: complete intersection}) and $(\lambda:\mu)\in\mathbb{CP}^1$. 
The last two linear equations in (\ref{eq: torus equations}) together imply that equation (\ref{eq: choice of third equation}) holds for all points in $E_{(\lambda:\mu)}$.  
It follows that $E_{(\lambda:\mu)}$\/ is a subset of our K3 surface $X$.  

Note that any single point $(x,y)$ in $E_{(\lambda:\mu)}$ can be used to determine the ratio $\lambda/\mu$. 
Hence every point $(x,y)$ of $X$\/ lies in $E_{(\lambda:\mu)}$ for a unique $(\lambda:\mu)\in \mathbb{CP}^1$.  
Let $f:X\to \mathbb{CP}^1$ be defined by $f(x,y)=(\lambda:\mu)$, where $(x,y)\in E_{(\lambda:\mu)}$.  
The $2\times 6$ coefficient matrix of the last two linear equations in (\ref{eq: torus equations}) is 
\begin{equation*}
\left[\begin{array}{rrrrrr}
\lambda & -\mu & \ 0 & \lambda & \mu &  \ 0 \\
\mu & -\lambda & \ 0 & -\mu & -\lambda & 0
\end{array}
\right],
\end{equation*}
which has rank $2$ for all $(\lambda:\mu)\in \mathbb{CP}^1$.  
Thus each fiber $f^{-1}(\lambda:\mu)=E_{(\lambda:\mu)}$ is a complex curve.  
It is known (cf.~Exercise~3.7(iii) on p.~152 of \cite{dim92}) that the genus of a generic complete intersection curve of multi-degree $(d_1,\ldots,d_{n-1})$ in $\mathbb{CP}^n$ is
\[
g=1-\frac{1}{2}d_1\cdots d_{n-1}\left(n+1-\sum_{i=1}^{n-1}d_i\right).
\]
Since $E_{(\lambda:\mu)}$ has multi-degree $(2,2,1,1)$ in $\mathbb{CP}^5$, we conclude that $E_{(\lambda:\mu)}$ has genus $g=1$ for generic $(\lambda:\mu)$.  
Hence we have shown that $f$\/ is an elliptic fibration.  

We now verify that the $\mathbb{Z}/2\times \mathbb{Z}/2$ action on $X$\/ maps fibers of $f$\/ to fibers of $f$.  
Suppose that $(x,y)$ lies in $E_{(\lambda:\mu)}=f^{-1}(\lambda:\mu)$, i.e., $(x,y)$ satisfies the four equations in (\ref{eq: torus equations}).  
By taking the complex conjugates of all terms in (\ref{eq: torus equations}), we can see that $\sigma(x,y)=(\overline{x},\overline{y})$ lies in $E_{(\overline{\lambda}:\overline{\mu})}=f^{-1}(\overline{\lambda}:\overline{\mu})$.  
By changing $y$\/ to $-y$, we can see that the coefficients $\lambda$\/ and $\mu$ switch their roles, and hence $\tau(x,y)=(x, -y)$ lies in $E_{(\mu:\lambda)}=f^{-1}(\mu:\lambda)$.  
Similarly, $(\tau\circ\sigma)(x,y)=(\overline{x},-\overline{y})$ lies in $E_{(\overline{\mu}:\overline{\lambda})}=f^{-1}(\overline{\mu}:\overline{\lambda})$.  

Moreover, $\sigma$, $\tau$ and $\tau\circ \sigma$ all map a generic fiber of $f$\/ to another fiber of $f$.  
The only exceptions are as follows:
\begin{itemize}
    \item[(i)] $\sigma$\/ maps $f^{-1}(\lambda:\mu)$ to itself when $\lambda\overline{\mu}\in\mathbb{R}$. 
    
    \item[(ii)] $\tau$\/ maps $f^{-1}(\lambda:\mu)$ to itself when
    $(\lambda:\mu)=(1:1)$ or $(\lambda:\mu)=(1:-1)$.
    
    \item[(iii)] $\tau\circ\sigma$\/ maps $f^{-1}(\lambda:\mu)$ to itself when $|\lambda|=|\mu|$.  
    
\end{itemize}
Hence, using \S\ref{sec: smooth structures} we may perform the same knot surgeries along four distinct torus fibers in a generic orbit, e.g., along 
$f^{-1}(1:1+i)$, $f^{-1}(1:1-i)$, $f^{-1}(1+i:1)$, and $f^{-1}(1-i:1)$.

\section{Second example}\label{sec: branched cover}

In this section, we present another construction of the Enriques-Einstein-Hitchin manifold $E$\/ which is more reminiscent of \cite{stisza23} and would be equally convenient for our purposes. Let $Y=\mathbb{CP}^1\times\mathbb{CP}^1$, and consider the maps $s,c:\cpone\to \cpone$ given by
\[s:(u:v)\mapsto (-u:v) \quad\quad\quad\quad c:(u:v)\mapsto (\bar{v}:\bar{u}).\]

Note that $s$ corresponds to a rotation of the 2-sphere with two fixed points, $c$ is an involution with a fixed circle, and $s\circ c$ has no fixed points. Define automorphisms of $Y$ by
\[r = s\times s \quad\quad\quad\quad j = c\times (s\circ c)\]
and observe that 
\begin{itemize}
    \item[(i)] $r$, $j$, and $r\circ j$ are commuting automorphisms of order two;
    \item[(ii)] $r$ is holomorphic and has exactly four fixed points;
    \item[(iii)] $j$\/ and $r\circ j$\/ are antiholomorphic and fixed point free.
\end{itemize}

Now consider the standard construction (see e.g.\ \S7.3 of \cite{gomsti99}) of the K3 surface as the desingularization of the double branched cover of $Y$\/ over a reducible curve of bidegree $(4,4)$. 
To build this concretely, we start with the subset
\begin{equation}\label{eq: curve C}
    C=\left(\,\cup_{i=1}^4 (\mathbb{CP}^1\times\{p_i\})\right)\cup\left(\, \cup_{j=1}^4(\{q_j\}\times\mathbb{CP}^1)\right),
\end{equation}
and choose the points $\{p_i\}$ and $\{q_j\}$ so that $C$\/ is preserved by all three of the maps $r$, $j$, and $r\circ j$, and also so that $C\cap \fix(r)=\emptyset$. 
Explicitly, for each collection, one may take the points 
$$\{(1:1+i),\, (-1:1+i),\, (1-i:1),\, (-1+i:1) \}.$$ 

The subset $C$\/ is not an embedded curve, but we may blow up the 16 transverse intersection points to obtain a smooth double branched cover $X\to Y\#16\cptwobar$ that is branched over the proper transform $\widetilde{C}$ of $C$\/ under the blow-ups.  
The resulting covering space $X$\/ is a K3 surface. 

To obtain $E$, we will take the quotient of $X$\/ by a free $\mathbb{Z}/2\times \mathbb{Z}/2$ action obtained by lifting the maps $r$ and $j$. First, we observe that both $r$ and $j$ extend to maps on the blow-up.

\begin{lemma}\label{lemma: blow up extension}
    The map $r$ extends to a holomorphic involution of $Y\#16\cptwobar$ with four fixed points. The maps $j$ and $r\circ j$ extend to antiholomorphic fixed point free involutions of $Y\#16\cptwobar$.    
\end{lemma}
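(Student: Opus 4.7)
The plan is to exploit the naturality of blow-up: any (anti)holomorphic involution $\phi$ of a complex surface that permutes a finite set $S$ of points lifts canonically to an (anti)holomorphic involution of the blow-up along $S$, and the fixed locus on the exceptional divisor $E_p$ over $p\in S$ comes from the projectivization of the differential $d\phi_p$ when $p$ is fixed, and is empty otherwise (since then $E_p$ is swapped with $E_{\phi(p)}$). So the content reduces to (i) verifying that each of $r$, $j$, $r\circ j$ permutes the $16$ blow-up centers, and (ii) counting fixed points on the lifts.

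For (i), I would write $C=C_h\cup C_v$ with $C_h=\cup_{i=1}^4(\cpone\times\{p_i\})$ and $C_v=\cup_{j=1}^4(\{q_j\}\times\cpone)$. Each of the three involutions is a product of maps on the two $\cpone$ factors, so preserving $C$ set-wise forces preserving $C_h$ and $C_v$ separately; the $16$ centers $C_h\cap C_v=\{q_j\}\times\{p_i\}$ are then automatically permuted. Hence the canonical lifts $\widetilde r$, $\widetilde j$, $\widetilde{r\circ j}$ to $Y\#16\cptwobar$ exist and carry the expected (anti)holomorphicity.

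For the fixed points of $\widetilde r$, I would first locate the fixed points of $r$ on $Y$, namely the four points $\{(0{:}1),(1{:}0)\}\times\{(0{:}1),(1{:}0)\}$. These lie off $C$ (none of the explicit $p_i$ or $q_j$ equals $(0{:}1)$ or $(1{:}0)$), so they descend unchanged to four isolated fixed points of $\widetilde r$. The hypothesis $\fix(r)\cap C=\emptyset$ ensures no blow-up center is fixed by $r$, so the $16$ exceptional divisors are swapped in eight pairs, contributing no further fixed points; thus $\widetilde r$ has exactly four fixed points, as claimed.

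For $\widetilde j$ and $\widetilde{r\circ j}$ I would reduce to showing that $j$ and $r\circ j$ are themselves fixed-point free on $Y$, after which the same pair-swapping argument rules out fixed points on any exceptional divisor. Since $s^2=\mathrm{id}$, one has $r\circ j=(s\circ c)\times c$, so in both $j$ and $r\circ j$ one factor is the map $s\circ c$; a direct check shows that $(-\bar v:\bar u)=(u:v)$ forces $|\lambda|^2=-1$ for the scaling $\lambda$, which is impossible, so $s\circ c$ has no fixed points on $\cpone$. The main obstacle is really just bookkeeping in the antiholomorphic case; this is handled by applying the standard holomorphic naturality of blow-up to the conjugate complex structure, so that the lifts are smoothly well-defined and respect the orientation-reversing behavior on each factor.
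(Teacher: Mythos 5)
Your proposal is correct and follows essentially the same route as the paper: extend each map across the blow-up using the (anti)holomorphic naturality of blowing up a permuted set of centers, observe that since no center is fixed the sixteen exceptional spheres are swapped in pairs and contribute no fixed points, and count the surviving fixed points downstairs. The only difference is that the paper proves the naturality by hand (defining the induced involution on each exceptional $\cpone$ via proper transforms of smooth curves through the center, two points at a time, inductively), whereas you cite it as standard; your explicit checks that $s\circ c$ is fixed point free and that $\fix(r)=\{(0{:}1),(1{:}0)\}^2$ misses $C$ are correct.
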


\begin{proof}
    For any self-intersection point $a=(q_j,p_i)$ of $C$, let $\pi:Y\#2\cptwobar\rightarrow Y$\/ be the blow-down map corresponding to the two blow-ups at $a$ and $r(a)$. 
    Since the restriction $Y\#2\cptwobar-\pi^{-1}(\{a,r(a)\})\rightarrow Y-\{a,r(a)\}$ is biholomorphic, it follows that an extension of $r$ to $Y\#2\cptwobar$ is determined by an involution on $\pi^{-1}(\{a,r(a)\})=\pi^{-1}(a)\cup \pi^{-1}(r(a))\cong \cpone\sqcup \cpone$. 
    
    Consider a ball $D\subset Y$\/ containing $a$\/ that is small enough such that $D$ and $r(D)$ are disjoint, do not contain any other intersection point of $C$\/ aside from $a$ and $r(a)$ respectively, and for which $D \cap \fix(r)=r(D)\cap \fix(r)=\emptyset$. 
    Now, for a point $h\in \pi^{-1}(a)$, let $H\subset D$\/ be any {smooth curve} passing through $a$\/ such that its proper transform of $\widetilde{H}$ intersects $\pi^{-1}(a)$ at $h$. 
    Since $r$ is a rotation (and hence holomorphic) it follows that $r(H)$ is a smooth curve in $r(D)$ and so the set $\widetilde{r(H)}\cap \pi^{-1}(r(a))$ contains exactly one point, which we call $h'$. 
    One can also check that starting with $h'$ and applying the same process yields $h$ as the corresponding element in $\widetilde{H}\cap \pi^{-1}(a)$.
    Therefore, the map that interchanges $h$\/ with $h'$ defines an involution $r_1$ on $\pi^{-1}(\{a,r(a)\})$. 
    It follows that extending $r$\/ by $r_1$, we obtain a holomorphic involution $r_1'$ on $Y\#2\cptwobar$. 
    
    Furthermore, since $r_1$ is fixed point free, the fixed points of $r_1'$ are in one-to-one correspondence with the fixed points of $r$, and so $r_1'$ has exactly four fixed points, which are the images of the four fixed points of $r$ in the blow-up $Y\#2\cptwobar$. 
    Proceeding inductively we obtain an extension of $r$ to $Y\#16\cptwobar$ (still denoted $r'$) with the desired properties. 
    One defines the extensions $j'$ and $(r \circ j)'$ in a similar manner.
\end{proof}

Next we will lift these maps to smooth involutions defined on $X$. 
The following lemma is straightforward to prove, but we include a proof for the convenience of the reader.

\begin{lemma}\label{lemma: lifting lemma}{\rm (cf. \S3.1 of \cite{degkha97} or Lemma 2.1 of \cite{stisza23})} 
Suppose that $X$ and $Y$ are\/ $4$-manifolds and that\/ $b:X\to Y$ is a\/ $2$-fold branched covering map with the branch locus $C\subset Y$. Suppose that $f:Y\to Y$ is a smooth involution and that:
\begin{itemize}
    \item[(i)] $f$ preserves $C$ set-wise;
    \item[(ii)] $\fix(f)\cap C=\emptyset$;
    \item[(iii)] $f_*$ commutes with the representation of the branched covering map \\
    $\phi:H_1(Y-C;\mathbb{Z})\to \mathbb{Z}/2$.
\end{itemize}
Then there is a lift $\tilde{f}:X\to X$ of $f$ which is fixed point free.
\end{lemma}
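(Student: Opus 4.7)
The plan is to construct the lift in two stages — first on the complement of the branch locus via classical covering-space theory, then extending smoothly across it — and subsequently to pick the correct lift so that it has no fixed points. Conditions (i) and (iii) drive the construction, while (ii) allows the fixed-point analysis to reduce to the unbranched part.

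For the unbranched stage, I would first observe that $b|_{X - b^{-1}(C)}:X - b^{-1}(C)\to Y-C$ is a regular $\mathbb{Z}/2$-covering classified by $\phi$, and that by (i) the map $f$ restricts to a self-map of $Y - C$. The usual covering-space lifting criterion then says $f|_{Y-C}$ admits a lift through $b$ precisely when $\phi\circ f_* = \phi$ on $H_1(Y - C;\mathbb{Z})$, which is exactly (iii); this yields two lifts $\tilde f_0$ and $\iota\circ\tilde f_0$ differing by the deck involution $\iota$. To extend the lift across $b^{-1}(C)$, I would use that $b$ restricts to a bijection $b^{-1}(C)\to C$, so the pointwise definition $\tilde f(\tilde p) := b^{-1}(f(b(\tilde p)))$ is forced by (i); smoothness across the branch locus follows from the standard local model for a double branched cover in which $b$ looks like $(z_1,z_2)\mapsto(z_1,z_2^2)$ with $C = \{z_2 = 0\}$, together with the fact that $f$ preserves $C$.

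For the fixed-point-free condition, any fixed point of $\tilde f$ projects to a point of $\fix(f)$, which lies in $Y - C$ by (ii); hence $\tilde f$ has no fixed points on $b^{-1}(C)$ regardless of which of the two lifts is chosen. Over each $p \in \fix(f)$, the two lifts act on the two-point fiber $b^{-1}(p)$ in opposite ways — one fixes the pair pointwise, the other swaps it — so one wants to pick the lift that swaps on every such fiber. The main obstacle I anticipate is showing that a single global lift can realize this swap uniformly over \emph{all} points of $\fix(f)$ simultaneously, so that no fixed points remain in $X - b^{-1}(C)$ either. I would approach this with a cocycle-style calculation: for any two points $p,p'\in\fix(f)$ and any path $\gamma$ in $Y - C$ from $p$ to $p'$, the chain $\gamma - f_*\gamma$ is a $1$-cycle (since $f$ fixes both endpoints), its image under $\phi$ is independent of the choice of $\gamma$ by (iii) (different choices differ by a cycle $\beta$ with $\phi(\beta - f_*\beta)=0$), and that image controls whether the behaviour of $\tilde f_0$ at $p$ and at $p'$ agrees; consistency of the swap choice then reduces to verifying that this obstruction vanishes for every pair.
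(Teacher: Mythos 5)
Your overall strategy is exactly the paper's: construct $\tilde f$ by path lifting over $Y-C$ (with (iii) supplying well-definedness), extend over $b^{-1}(C)$ by the forced formula $\tilde f=b^{-1}\circ f\circ b$, use (ii) to rule out fixed points over the branch locus, and compare the two points of a fiber over a point of $\fix(f)$ via the monodromy of a connecting path. So the decomposition and all the correct ingredients are there.

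The gap is the step you flag and then leave open: you reduce fixed-point-freeness to the vanishing of $\phi\bigl([\gamma - f_*\gamma]\bigr)$ for a path $\gamma$ in $Y-C$ joining two points of $\fix(f)$, but you never establish that vanishing, and it is the entire content of the claim. It is not a formal consequence of (iii): condition (iii) gives $\phi(f_*\alpha-\alpha)=0$ only for \emph{cycles} $\alpha$, whereas $\gamma-f_*\gamma$ is a cycle assembled from a non-closed path and need not lie in the image of $f_*-\mathrm{id}$ on $H_1(Y-C;\mathbb{Z})$. A low-dimensional model shows the obstruction can be nonzero even when (iii) holds: for the unbranched double cover $z\mapsto z^2$ of $S^1$ and $f(z)=\bar z$, one has $\phi\circ f_*=\phi$, yet for $\gamma$ the upper semicircle from $1$ to $-1$ the cycle $\gamma-f_*\gamma$ is the fundamental class, $\phi$ of it is $1$, and correspondingly neither lift $\pm\bar z$ is fixed point free. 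Thus your argument as written only shows that each lift either fixes or swaps each fiber over $\fix(f)$, with the agreement between different fixed points governed by this class; to conclude that a single lift swaps everywhere you must actually verify the vanishing, which requires input beyond (i)--(iii) (e.g.\ a direct geometric check for the particular $f$, $C$, and fixed points arising in the application). For what it is worth, this is also exactly the point where the paper's own proof is most compressed: it asserts that the lift of $f(b(\gamma))$ starting at $z$ ends at $z'$ ``similar to the above argument,'' which is the same unproved vanishing.
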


\begin{proof}
    For a point $x_0\in X$ with $b(x_0)\notin C$, define $\tilde{f}(x_0)$ by choosing one of the lifts of $f(b(x_0))$. 
    Once this choice is made, we can define the rest of the lift as follows: for any other point $x\in X$ with $b(x)\notin C$, choose a path $\gamma:x_0\to x$. 
    Then $f(b(\gamma))$ is a path from $f(b(x_0))$ to $f(b(x))$. 
    We define $\tilde{f}(x)$ to be the endpoint of a lift of $f(b(\gamma))$ starting at $\tilde{f}(x_0)$. 
    Note that this endpoint does not depend on our choice of $\gamma$. 
    Indeed, if $\eta$ is any other such path, the loop $b(\gamma*\eta^{-1})$ lifts to a loop if and only if $f(b(\gamma*\eta^{-1}))$ does, since $f_*$ commutes with $\phi$. 
    If $b(x)\in C$, then we can unambiguously define $\tilde{f}(x) = b^{-1}(f(b(x)))$. 
    Since $b\circ \tilde{f}^2 = f^2\circ b=b$, it follows that $\tilde{f}^2$ preserves the fibers of $b$, and so $\tilde{f}$ has order either two or four.  
    
    Now, if $z\in X$ and $b(z)$ is a fixed point of $f$ (note $b(z)\notin C$ by assumption), the lift $\tilde{f}$ either preserves or exchanges the two lifts of $b(z)$. We claim that if $\tilde{f}(z) = z$, then $\tilde{f}(z')=z'$ for all other points $z'$ with $b(z')$ a fixed point of $f$. Indeed, choose a path $\gamma$ from $z$ to $z'$; then $f(z')$ is the endpoint of a lift of $b(\gamma)$ starting from $z$. 
    However, $b(\gamma)$ is still a path from $b(z)$ to $b(z')$. 
    Similar to the above argument, the (well-defined) lift of $b(\gamma)$ starting from $z$\/ is exactly $\gamma$. 
    Thus, by composing with the deck transformation if necessary, we conclude that $f$\/ has exactly one fixed point free lift. 
\end{proof}

\begin{remark}\label{remark: order is 2 or 4}
    In the case that the lift of $f$ has order $2$, one obtains a fixed point free involution on $X$. In general, the lift may have order $2$ or $4$ (see \S 2.3 of \cite{stisza23}). 
\end{remark}

\begin{lemma}
    The maps $r$, $j$, and  $(r\circ j)$ lift to a free\/  $\mathbb{Z}/2\times \mathbb{Z}/2$ action on X. 
\end{lemma}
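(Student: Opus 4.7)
The plan is to apply Lemma~\ref{lemma: lifting lemma} to each of $r'$, $j'$, and $(r\circ j)'$ (the extensions to $Y\#16\cptwobar$ provided by Lemma~\ref{lemma: blow up extension}), and then combine the resulting lifts into a $\mathbb{Z}/2\times\mathbb{Z}/2$ action on $X$.

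The hypotheses of Lemma~\ref{lemma: lifting lemma} are straightforward to check. Each of the three maps preserves $C$ setwise (by the choice of points $p_i,q_j$), and hence preserves the proper transform $\widetilde C$. The fixed locus condition is explicit: $\fix(r)\cap C=\emptyset$ holds by construction, the four fixed points of $r'$ lie off the sixteen blown-up nodes, and $j'$ and $(r\circ j)'$ are fixed-point free by Lemma~\ref{lemma: blow up extension}. Finally, the covering representation $\phi$ is just mod-$2$ intersection with $\widetilde C$ and is automatically preserved by any map preserving $\widetilde C$. So Lemma~\ref{lemma: lifting lemma} yields fixed-point-free lifts $\tilde r,\tilde j,\widetilde{r\circ j}$ on $X$.

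The main obstacle is to upgrade these lifts to commuting involutions. For $\tilde r$ the involution property is immediate: if $\tilde r^2$ equalled the deck transformation $\tau$, then above any of the four fixed points of $r'$ the map $\tilde r$ would either fix both preimages or swap them, forcing $\tilde r^2$ to be the identity on that fiber and contradicting the freeness of $\tau$. However, since $j'$ and $(r\circ j)'$ have no fixed points this argument breaks down, and as Remark~\ref{remark: order is 2 or 4} warns, those lifts could a priori have order four.

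To rule this out I would pass to the explicit model of $X$. Write $s=\prod_{i=1}^{4}L_i(u,v)\cdot\prod_{j=1}^{4}M_j(u',v')$ for the section of $\mathcal O(4,4)$ cutting out $C$, where $L_i,M_j$ are the obvious linear forms vanishing at the chosen points; then $X$ is the desingularization of $\{x^2=s(y)\}$ in the total space of a square root of $\mathcal O(4,4)$. Pairing the eight linear factors according to how $s$ and $c$ permute the chosen points, a direct calculation gives $s\circ r=s$ and $s\circ j=\overline s$. These identities let me write down explicit lifts
\[
\tilde r(y,x)=(r(y),-x),\qquad \tilde j(y,x)=(j(y),\bar x),
\]
both manifestly involutions that commute and that extend across the exceptional divisors; the sign in $\tilde r$ is chosen so that above each fixed point of $r$ the two sheets of $X$ get swapped. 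Their composition $\widetilde{r\circ j}(y,x)=(r(j(y)),-\bar x)$ is then a third commuting involution. Since each lift is fixed-point free (by construction for $\tilde j$ and $\widetilde{r\circ j}$, and by the sign choice for $\tilde r$), the three together generate the desired free $\mathbb{Z}/2\times\mathbb{Z}/2$ action on $X$.
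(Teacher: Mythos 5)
Your argument reaches the lemma by a route that genuinely differs from the paper's at the decisive step. The first half coincides: both verify the hypotheses of Lemma~\ref{lemma: lifting lemma} for $r'$, $j'$, $(r\circ j)'$ and obtain fixed-point-free lifts (your observation that $\phi$ is mod-$2$ intersection with $\widetilde C$ --- well defined because $[\widetilde C]=\pi^*[C]-2\sum E_i$ is even --- is a slightly slicker way to check hypothesis (iii) than the paper's computation that $H_1(Y\#16\cptwobar-\widetilde C;\mathbb Z)\cong\mathbb Z$, though you are breezier than the paper about why $\widetilde C$, as opposed to $C$, is preserved near the exceptional spheres). The divergence is in ruling out order-$4$ lifts: the paper quotes Hitchin's index-theoretic result that a K3 surface admits no free $\mathbb Z/4$ action, which disposes of all three lifts at once, whereas you handle $\tilde r$ by the correct and pleasantly elementary observation that $\tilde r$ swaps the two sheets over each fixed point of $r'$, so $\tilde r^2$ is a deck transformation with fixed points and hence the identity, and you handle $\tilde j$, $\widetilde{r\circ j}$ via explicit equations on the model $\{x^2=s\}$. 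Your route buys self-containedness and explicit formulas; the paper's buys brevity and insensitivity to the particular model. Three points to tighten in your version. First, the identities $s\circ r=s$ and $s\circ j=\bar s$ are the load-bearing claims and should actually be computed: for $r$, each linear factor satisfies $L_{(a:b)}(-u,v)=-L_{(-a:b)}(u,v)$ and the four points in each ruling are permuted in two $2$-cycles, so the product picks up $(-1)^4=1$; for the antiholomorphic maps it is enough to know $s\circ j=\epsilon\bar s$ for some constant $\epsilon$ with $|\epsilon|=1$ (forced by $j^2=\mathrm{id}$), since then $(y,x)\mapsto(j(y),\delta\bar x)$ with $\delta^2=\epsilon$ squares to $(y,|\delta|^2x)=(y,x)$ whatever the value of $\epsilon$. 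Second, your formulas are written on the singular double cover, so you must say why they extend to the minimal resolution $X$ and remain free over the exceptional curves (this works because $r$, $j$ and $r\circ j$ permute the sixteen nodes without fixed points). Third, to conclude that these explicit involutions are the lifts produced by Lemma~\ref{lemma: lifting lemma}, invoke the uniqueness of the fixed-point-free lift; that same uniqueness also gives the commutativity $\tilde r\circ\tilde j=\widetilde{r\circ j}=\tilde j\circ\tilde r$ without any computation, which is how the paper closes the argument.
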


\begin{proof}
    We will verify the hypotheses of Lemma~\ref{lemma: lifting lemma} for the branch curve $\widetilde{C}$, the proper transform of (\ref{eq: curve C}). First, let $\pi:Y\#16\cptwobar\rightarrow Y$\/ be the blow-down map, and 
    let $E=\bigcup_{i,j}\pi^{-1}(q_j,p_i)$ be the union of the exceptional $2$-spheres in $Y\#16\cptwobar$. 
    Let $r':Y\#16\cptwobar\to Y\#16\cptwobar$ be as in the proof of Lemma~\ref{lemma: blow up extension}.  
    Then for a point $x\in \pi^{-1}(C)-E$, we have by construction that 
    $$r'(x)=\left(\pi^{-1}|_{(Y\#16\cptwobar)-E}\circ r\circ \pi\right)(x).$$ 
    Furthermore, as $\pi(x)\in C$\/ and $C$\/ is preserved by $r$, it follows that 
    $$r'(x)\in \pi^{-1}|_{(Y\#16\cptwobar)-E}(C)\subset \widetilde{C}.$$ 
    On the other hand, if $x\in \widetilde C\cap E$, then $x\in \widetilde C\cap \pi^{-1}(q_j,p_i)$ for some $j,i$. 
    Moreover, since the only components of $C$\/ that contain $(q_j,p_i)$ are $\cpone\times \{p_i\}$ and $\{q_j\}\times \cpone$, it follows that $x$ is either an element of (the single point in) $\widetilde{\cpone\times \{p_i\}}\cap \pi^{-1}(q_j,p_i)$ or $\widetilde{\{q_j\}\times\cpone}\cap \pi^{-1}(q_j,p_i)$. It particular, we have that one of $r'(x)\in \widetilde{r(\cpone\times \{p_i\})}\cap \pi^{-1}r(q_j,p_i)$ or $r'(x)\in \widetilde{r(\{q_j\}\times\cpone)}\cap \pi^{-1}r(q_j,p_i)$ is true. 
    Regardless, since both $r(\{q_j\}\times\cpone)$ and $r(\cpone\times \{p_i\})$ are contained in $C$ it must be the case that $r'(x)$ is contained in $\widetilde C$. 
    Therefore $\widetilde C$ is preserved by $r'$. 
    An analogous argument shows that $j$ (and $r\circ j$) also preserves $\widetilde C$ as a set.

    By Lemma \ref{lemma: blow up extension}, the set $\fix(r')$ is exactly $\pi^{-1}|_{(Y\#16\cptwobar)-E}(\fix(r))$. So, we have
    \begin{align*}
        \fix(r')\cap \widetilde C &= \pi^{-1}|_{(Y\#16\cptwobar)-E}(\fix(r))\cap\widetilde C\\
        &\subseteq \pi^{-1}(\fix(r))\cap\pi^{-1}(C)\\
        &=\pi^{-1}(\fix(r)\cap C) =\emptyset.
    \end{align*}
    Now, $H_1(Y\#16\cptwobar-\widetilde C;\mathbb Z)\cong \mathbb Z$\/ 
    since each exceptional $2$-sphere $\pi^{-1}(q_j,p_i)$ gives rise to a cylinder that connects the meridians of $\widetilde{\cpone\times \{p_i\}}$ and $\widetilde{\{q_j\}\times\cpone}$.  
    As $r'$, $j'$ and $(r\circ j)'$ are automorphisms, $r_\ast'$, $j_\ast'$ and $(r\circ j)_\ast'$ are plus or minus the identity map on the group $\mathbb{Z}$, and hence commute with $\phi:H_1(Y\#16\cptwobar-\widetilde C;\mathbb Z)\rightarrow\mathbb Z/2$. 
    Therefore, by Lemma~\ref{lemma: lifting lemma}, $r'$, $j'$ and $(r\circ j)'$ all lift to free actions on the K3 surface $X$. 
    We will denote these lifts by $\tilde r$, $\tilde j$ and $\widetilde {r\circ j}$,  respectively. 

    By Remark~\ref{remark: order is 2 or 4}, each of these three lifts has order $2$ or $4$.  
    By an index-theoretic argument, Hitchin showed (see the last three paragraphs of \S3 in \cite{hit74}) that a K3 surface cannot support a free $\mathbb{Z}/4$ action.  
    Thus all of our lifts have order two.  
    The fact that $\tilde{r}$ has order $2$ was also observed in the first paragraph of \S3.1 in \cite{degkha97}. 
    Since $\tilde r\circ \tilde j = \widetilde{r\circ j}$ and $\tilde j\circ \tilde r = \widetilde{j\circ r}$, we see that $\tilde r$ and $\tilde j$ commute because $r$\/ and $j$ commute.   
    It follows that the subgroup 
    $\langle \tilde r,\tilde j\rangle\subset \textrm{Aut} (X)$ is isomorphic to $\mathbb Z/2\times \mathbb Z/2$. 
\end{proof}

A particular {Einstein-Enriques-Hitchin} manifold is now given by the quotient space $E:=X/\langle \tilde r,\tilde j\rangle$. Since the maps $r$\/ and $j$\/ preserve the rulings of $Y$, their lifts will preserve the elliptic fibration (that is induced by the projection map onto the second factor, $\mathrm{pr}_2:Y=\cpone\times\cpone\to\cpone$).  
As in \S \ref{sec: smooth structures}, we can now perform  Fintushel-Stern knot surgeries along four disjoint torus fibers related by this action.

\bibliographystyle{abbrv}
\bibliography{references}

\end{document}